\def\mcY{{\mathcal Y}}
\def\mbZ{{\mathbb Z}}
\def\mbC{{\mathbb C}}
\def\mbL{{\mathbb L}}
\def\mbQ{{\mathbb Q}}
\newcommand*{\Hilb}[1]{(\mbC^2)^{[#1]}}
\newcommand*{\torus}[1]{(\mbC^*)^{#1}}
\newtheorem{proposition}{Proposition}[section]
\newtheorem{theorem}[proposition]{Theorem}
\newtheorem{corollary}[proposition]{Corollary}
\newtheorem{lemma}[proposition]{Lemma}
\title[Betti numbers of the quasihomogeneous Hilbert schemes]{A simple proof of the formula for the Betti numbers of the quasihomogeneous Hilbert schemes.}
\author{Alexandr Buryak}
\address{Alexandr~Buryak:\newline
Department of Mathematics,
ETH Zurich, Ramistrasse 101 8092, HG G27.1, Zurich, Switzerland.} 
\email{buryaksh@gmail.com}
\author{Boris Lvovich Feigin}
\address{Boris~Lvovich~Feigin:\newline
National Research University Higher School of Economics, Russia, Moscow, 101000, Myasnitskaya ul., 20, \newline
Landau Institute for Theoretical Physics, Russia, Chernogolovka, 142432, prosp. Akademika Semenova, 1a, and \newline
Independent University of Moscow, Russia, Moscow, 119002, Bolshoy Vlasyevskiy per., 11}
\email{borfeigin@gmail.com}
\author{Hiraku Nakajima}
\address{Hiraku~Nakajima:\newline
Research Institute for Mathematical Sciences, Kyoto University, Kyoto 606-8502, Japan}
\email{nakajima@kurims.kyoto-u.ac.jp}
\begin{document}

\begin{abstract}
In a recent paper the first two authors proved that the generating series of the Poincare polynomials of the quasihomogeneous Hilbert schemes of points in the plane has a simple decomposition in an infinite product. In this paper we give a very short geometrical proof of that formula.   
\end{abstract}

\maketitle

\section{INTRODUCTION}

The Hilbert scheme $(\mbC^2)^{[n]}$ of $n$ points in the plane $\mbC^2$ parametrizes ideals $I\subset\mbC[x,y]$ of colength $n$: $\dim_{\mbC}\mbC[x,y]/I=n$. It is a nonsingular, irreducible, quasiprojective algebraic variety of dimension $2n$ with a rich and much studied geometry, see \cite{Got,Nak1} for an introduction.

The cohomology groups of $(\mbC^2)^{[n]}$ were computed in \cite{EllStr}, and the ring structure in the cohomology was determined independently in the papers \cite{LehSor} and \cite{Vas}.

There is a $(\mbC^*)^2$-action on $(\mbC^2)^{[n]}$ that plays a central role in this subject. The algebraic torus $(\mbC^*)^2$ acts on $\mbC^2$ by scaling the coordinates, $(t_1,t_2)\cdot(x,y)=(t_1x,t_2y)$. This action lifts to the $(\mbC^*)^2$-action on the Hilbert scheme $(\mbC^2)^{[n]}$.

For arbitrary non-negative integers $\alpha$ and $\beta$, such that $\alpha+\beta\ge 1$, let $T_{\alpha,\beta}=\{(t^\alpha,t^\beta)\in(\mbC^*)^2|t\in\mbC^*\}$ be a one-dimensional subtorus of $(\mbC^*)^2$. If $\alpha$ and $\beta$ are non-zero, then the fixed point set $\left(\Hilb{n}\right)^{T_{\alpha,\beta}}$ is called the quasihomogeneous Hilbert scheme of points on the plane $\mbC^2$. 

The quasihomogeneous Hilbert scheme $\left(\Hilb{n}\right)^{T_{\alpha,\beta}}$ is compact and in general has many irreducible components. They were described in \cite{Eva}. In the case $\alpha=1$ the Poincare polynomials of the irreducible components were computed in \cite{Bur}. 

The Poincare polynomial of a manifold $X$ is defined by $P_q(X)=\sum_{i\ge 0}\dim H_i(X;\mbQ)q^{\frac{i}{2}}$. In~\cite{BurFei} the first two authors proved the following theorem. 

\begin{theorem}\label{theorem: quasihomogeneous1}
Suppose $\alpha$ and $\beta$ are positive coprime integers, then
\begin{gather*}
\sum_{n\ge 0}P_q\left(\left((\mbC^2)^{[n]}\right)^{T_{\alpha,\beta}}\right)t^n=\prod_{\substack{i\ge 1\\(\alpha+\beta)\nmid i}}\frac{1}{1-t^i}\prod_{i\ge 1}\frac{1}{1-qt^{(\alpha+\beta)i}}.
\end{gather*}
\end{theorem}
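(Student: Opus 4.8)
The plan is to compute $P_q(X_n)$, where $X_n:=\left(\Hilb{n}\right)^{T_{\alpha,\beta}}$, by a Bialynicki--Birula decomposition coming from a one-parameter subgroup complementary to $T_{\alpha,\beta}$, and then to evaluate the resulting generating series combinatorially. Since $\Hilb{n}$ is smooth, its torus-fixed locus $X_n$ is smooth, and being compact it is in fact projective. Because $\alpha$ and $\beta$ are coprime I can choose integers $p,q$ with $\alpha q-\beta p=1$ and set $\sigma(s)=(s^p,s^q)\in\torus{2}$; then $(\alpha,\beta)$ and $(p,q)$ form a basis of the cocharacter lattice, so $\sigma$ and $T_{\alpha,\beta}$ generate $\torus{2}$. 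In particular $\sigma$ preserves $X_n$, and its fixed points on $X_n$ are exactly the $\torus{2}$-fixed points of $\Hilb{n}$, the monomial ideals $I_\lambda$ with $|\lambda|=n$.

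First I would run the decomposition. Applying the Bialynicki--Birula theorem to the smooth projective variety $X_n$ with the $\sigma$-action, whose fixed points are isolated, pavings $X_n$ by affine cells $C_\lambda$, one for each partition of $n$, with $\dim_{\mbC}C_\lambda$ equal to the number of positive $\sigma$-weights on the tangent space $T_{I_\lambda}X_n=\left(T_{I_\lambda}\Hilb{n}\right)^{T_{\alpha,\beta}}$, the identification of the tangent space with the $T_{\alpha,\beta}$-fixed subspace being the standard fact about fixed loci of torus actions on smooth varieties. Hence
\[
P_q(X_n)=\sum_{|\lambda|=n}q^{d_\lambda},\qquad d_\lambda=\#\{\text{positive }\sigma\text{-weights on }\left(T_{I_\lambda}\Hilb{n}\right)^{T_{\alpha,\beta}}\}.
\]

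Second I would identify $d_\lambda$ explicitly. Using the classical arm--leg character of $T_{I_\lambda}\Hilb{n}$, each box $s\in\lambda$ contributes two tangent weights whose $\torus{2}$-exponents sum to $(1,1)$, so after restriction to $T_{\alpha,\beta}$ via $(t_1,t_2)=(t^\alpha,t^\beta)$ their weights sum to $\alpha+\beta\neq 0$. Consequently at most one of the two weights is $T_{\alpha,\beta}$-fixed, and whenever it is fixed its $\sigma$-weight is a nonzero integer $\pm m$; this already shows each $I_\lambda$ is an isolated $\sigma$-fixed point of $X_n$. A short computation with arm and leg lengths, using $\alpha q-\beta p=1$, shows that the fixed weights that are moreover $\sigma$-attracting are precisely those coming from boxes $s$ with $a(s)+1=m\alpha$ and $l(s)=m\beta$ for some integer $m\ge 1$ (equivalently, hook length $(\alpha+\beta)m$ with the arm pinned to $m\alpha-1$). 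Therefore $d_\lambda$ counts exactly these distinguished boxes, and summing over $n$ reduces the theorem to the purely combinatorial identity
\[
\sum_{\lambda}q^{d_\lambda}t^{|\lambda|}=\prod_{\substack{i\ge 1\\(\alpha+\beta)\nmid i}}\frac{1}{1-t^i}\prod_{i\ge 1}\frac{1}{1-qt^{(\alpha+\beta)i}}.
\]

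The main obstacle is this last identity. I would recognize the right-hand side as $\sum_{\mu}q^{r(\mu)}t^{|\mu|}$, where $r(\mu)$ is the number of parts of $\mu$ divisible by $\alpha+\beta$, and prove the equality by encoding each $\lambda$ in its Maya diagram and passing to the $(\alpha+\beta)$-runner abacus: a distinguished box becomes a move of length $m$ along a single runner, subject to a constraint on the numbers of beads and gaps between its endpoints. The delicate point, and where I expect the real work to be, is that the arm condition $a(s)+1=m\alpha$ couples the different runners, so the series does not factor runner-by-runner in the naive way; the correct bookkeeping of the bead positions across runners — or, equivalently, an explicit bijection matching $d_\lambda$ with the count of parts divisible by $\alpha+\beta$ — is what must be carried out to complete the proof.
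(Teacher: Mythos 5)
Your reduction is sound as far as it goes: $X_n=\left(\Hilb{n}\right)^{T_{\alpha,\beta}}$ is smooth and projective, the complementary cocharacter $\sigma$ with $\alpha q-\beta p=1$ has the monomial ideals as its only fixed points on $X_n$, the Bialynicki--Birula paving then gives $P_q(X_n)=\sum_{|\lambda|=n}q^{d_\lambda}$, and your computation with the arm--leg character correctly identifies $d_\lambda$ as the number of boxes $s$ with $a(s)+1=m\alpha$, $l(s)=m\beta$ --- which, for coprime $\alpha,\beta$, is exactly the quantity $h_{\alpha,\beta}(\lambda)$ appearing in Corollary~\ref{theorem: combinatorial identity}. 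The problem is what you do (or rather, do not do) next. The identity
\begin{gather*}
\sum_{\lambda}q^{h_{\alpha,\beta}(\lambda)}t^{|\lambda|}=\prod_{\substack{i\ge 1\\(\alpha+\beta)\nmid i}}\frac{1}{1-t^i}\prod_{i\ge 1}\frac{1}{1-qt^{(\alpha+\beta)i}}
\end{gather*}
is not a routine abacus manipulation: as you yourself observe, the condition $a(s)+1=m\alpha$ couples the runners, and the series does not factor runner-by-runner. You explicitly defer this step (``what must be carried out to complete the proof''), so the proposal has a genuine gap, and the gap sits exactly on the hard part of the theorem. Indeed, your argument up to this point is essentially the first half of the proof in \cite{BurFei}; the ``non-trivial combinatorial computations with Young diagrams'' that the present paper was written to avoid are precisely the missing proof of this identity. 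In this paper the logic runs the other way: the identity is \emph{deduced} from the geometric theorem (Corollary~\ref{theorem: combinatorial identity}), not used to prove it.

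The ideas you are missing are the two lemmas of Section~\ref{section: proof}. First, one works with $\left(\Hilb{n}\right)^{T_{\alpha,\beta}\times\Gamma_{\alpha+\beta}}$ (which equals your $X_n$ in the coprime case) and shows that its class in $K_0(\nu_{\mbC})$ depends only on $\alpha+\beta$: the components of $\left(\Hilb{n}\right)^{\Gamma_{\alpha+\beta}}$ are quiver varieties carrying a symplectic form of weight $\alpha+\beta$ under the $\mbC^*$-action, and since all tangent weights at a fixed point are divisible by $\alpha+\beta$, the form pairs the non-positive weight space with the positive weight space, forcing every Bialynicki--Birula cell to be an affine bundle of rank exactly half the dimension of its component. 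This replaces your box-counting by a single duality argument. Second, the resulting case $\alpha=0$ is computed directly via the decomposition $I=I_{\lambda^1,x_1}\cap\dots\cap I_{\lambda^k,x_k}$ and the power structure over the Grothendieck ring, which produces the infinite product without any bijection on partitions. If you want to salvage your route, you would have to supply a complete combinatorial proof of the displayed identity (it is Theorem~1.2 of \cite{BurFei} and is genuinely delicate); otherwise the argument is circular relative to this paper.
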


In this paper we give another proof of this theorem. In \cite{BurFei} the large part of the proof consists of non-trivial combinatorial computations with Young diagrams. Our new proof is more geometrical and is much shorter. In fact, we prove a slightly more general statement. 

Let $\Gamma_m$ be the finite subgroup of $(\mbC^*)^2$ defined by 
$$
\Gamma_m=\left\{(\zeta^j,\zeta^{-j})\in\torus{2}\left|\zeta=\exp\left(\frac{2\pi i}{m}\right),j=0,1,\ldots,m-1\right.\right\}.
$$
For a manifold $X$ let $H^{BM}_*(X;\mbQ)$ denote the Borel-Moore homology group of $X$ with rational coefficients. Let $P^{BM}_q(X)=\sum_{i\ge 0}\dim H^{BM}_i(X;\mbQ)q^{\frac{i}{2}}$. 

We prove the following theorem.

\begin{theorem}\label{theorem: quasihomogeneous2}
Let $\alpha$ and $\beta$ be any two non-negative integers, such that $\alpha+\beta\ge 1$. Then we have
\begin{gather}\label{formula: quasihomogeneous2}
\sum_{n\ge 0}P^{BM}_q\left(\left((\mbC^2)^{[n]}\right)^{T_{\alpha,\beta}\times\Gamma_{\alpha+\beta}}\right)t^n=\prod_{\substack{i\ge 1\\(\alpha+\beta)\nmid i}}\frac{1}{1-t^i}\prod_{i\ge 1}\frac{1}{1-qt^{(\alpha+\beta)i}}.
\end{gather}
\end{theorem}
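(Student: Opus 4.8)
The plan is to realize $Y_n:=\left(\Hilb{n}\right)^{T_{\alpha,\beta}\times\Gamma_{\alpha+\beta}}$ as the fixed locus of a diagonalizable group acting on the smooth variety $\Hilb{n}$, and then to run the Bialynicki--Birula machine on it. Write $m=\alpha+\beta$ and let $G\subset\torus2$ be the image of $T_{\alpha,\beta}\times\Gamma_m$, a closed diagonalizable subgroup; since $\Hilb n$ is smooth, $Y_n=\left(\Hilb n\right)^G$ is smooth (though in general disconnected). Every monomial ideal $I_\lambda$ with $|\lambda|=n$ is fixed by the whole torus $\torus2$, hence lies in $Y_n$. First I would choose a one-parameter subgroup $\sigma\colon\mbC^*\to\torus2$ whose image, together with $T_{\alpha,\beta}$, is dense in $\torus2$; any $\sigma$ not proportional to $(\alpha,\beta)$ works. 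Then $\sigma$ commutes with $G$, so it preserves $Y_n$, and $Y_n^{\sigma}=\left(\Hilb n\right)^{\langle G,\sigma\rangle}=\left(\Hilb n\right)^{\torus2}=\{I_\lambda:|\lambda|=n\}$ is a finite set of isolated fixed points.

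Next I would apply the Bialynicki--Birula decomposition to $Y_n$ with respect to $\sigma$. On a smooth variety the attracting set $C_\lambda=\{y\in Y_n:\lim_{s\to0}\sigma(s)\cdot y=I_\lambda\}$ of an isolated fixed point is an affine cell $C_\lambda\cong\mbC^{d_\lambda}$, where $d_\lambda$ is the number of positive $\sigma$-weights on $T_{I_\lambda}Y_n=\left(T_{I_\lambda}\Hilb n\right)^{G}$, and these cells form a filtrable decomposition $Y_n=\bigsqcup_{|\lambda|=n}C_\lambda$. Since $H^{BM}_*(\mbC^{d})$ is one-dimensional and concentrated in degree $2d$, the exact sequences of the filtration split and the cell classes give a basis of $H^{BM}_*(Y_n)$ in even degrees, whence $P^{BM}_q(Y_n)=\sum_{|\lambda|=n}q^{d_\lambda}$. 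This is exactly where Borel--Moore homology is needed: when $\alpha\beta=0$ the variety $Y_n$ is non-compact, yet the attracting cells still compute $H^{BM}_*$ freely. (For $\alpha,\beta>0$ one has $Y_n$ compact and $H^{BM}_*=H_*$, recovering Theorem~\ref{theorem: quasihomogeneous1} in the coprime case.)

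The third step is to make $d_\lambda$ explicit through the arm--leg description of the tangent space, $T_{I_\lambda}\Hilb n=\sum_{\square\in\lambda}\bigl(t_1^{l(\square)+1}t_2^{-a(\square)}+t_1^{-l(\square)}t_2^{a(\square)+1}\bigr)$, where $a(\square)$ and $l(\square)$ are the arm and leg lengths. A weight $(w_1,w_2)$ is $T_{\alpha,\beta}$-fixed iff $\alpha w_1+\beta w_2=0$ and $\Gamma_m$-fixed iff $w_1\equiv w_2$ modulo $m$; a short lattice computation shows that these two conditions together are equivalent to $(w_1,w_2)$ being an integer multiple of the (non-reduced) vector $(\beta,-\alpha)$. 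Imposing in addition $\sigma$-positivity selects from each contributing box the summand with $(a(\square),l(\square)+1)=k(\alpha,\beta)$ for some integer $k\ge1$; the opposite sign of $\sigma$, equivalently conjugating $\lambda$, selects the other summand and leaves the generating function unchanged. Hence $d_\lambda=\#\{\square\in\lambda:a(\square)=k\alpha,\ l(\square)+1=k\beta\ \text{for some}\ k\ge1\}$, read through the conjugate partition in the degenerate cases $\alpha\beta=0$.

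It remains to prove the identity $\sum_{\lambda}q^{d_\lambda}t^{|\lambda|}=\prod_{\substack{i\ge1\\ m\nmid i}}\frac{1}{1-t^i}\prod_{i\ge1}\frac{1}{1-qt^{mi}}$, and I expect this to be the main obstacle. I would first note that the right-hand side equals $\sum_\lambda q^{p_m(\lambda)}t^{|\lambda|}$, where $p_m(\lambda)$ is the number of parts of $\lambda$ divisible by $m$, since a partition splits uniquely into its parts not divisible by $m$ (the first product) and its parts divisible by $m$ (the second, each weighted by $q$). Thus the theorem reduces to the combinatorial statement $\sum_{|\lambda|=n}q^{d_\lambda}=\sum_{|\lambda|=n}q^{p_m(\lambda)}$. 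This cannot hold term by term, as $d_\lambda\ne p_m(\lambda)$ already for $\lambda=(3)$, $m=2$, so a bijection or a direct evaluation is required. The natural route is to read the boundary of $\lambda$ as a bi-infinite $0/1$-path and to compute $\sum_\lambda q^{d_\lambda}t^{|\lambda|}$ by a transfer-matrix (abacus) argument, organizing bead positions by their residue modulo $m$; the boxes counted by $d_\lambda$ are precisely those that attach the $q$'s to the parts divisible by $m$. This single elementary product manipulation is what replaces the lengthy Young-diagram combinatorics of \cite{BurFei}.
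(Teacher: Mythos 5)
Your first three steps are sound: $Y_n=\left(\Hilb{n}\right)^{T_{\alpha,\beta}\times\Gamma_{\alpha+\beta}}$ is the smooth fixed locus of a diagonalizable group, a generic attracting one-parameter subgroup gives a Bialynicki--Birula decomposition into affine cells indexed by partitions, and the weight computation on $(T_{I_\lambda}\Hilb{n})^{G}$ correctly identifies the cell dimension $d_\lambda$ with the arm--leg statistic (your $d_\lambda$ is the paper's $h_{\alpha,\beta}$ up to conjugating the partition). But this reduces the theorem to exactly the combinatorial identity~\eqref{formula: generalized identity}, i.e.\ to Corollary~\ref{theorem: combinatorial identity} --- which the paper derives \emph{as a consequence} of Theorem~\ref{theorem: quasihomogeneous2} by the very same Bialynicki--Birula argument you use. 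That identity is the actual mathematical content here: for $\alpha=1$ it is Theorem~1.2 of \cite{BurFei}, whose proof is the ``non-trivial combinatorial computation with Young diagrams'' that this paper was written to avoid. Your final step --- ``read the boundary as a bi-infinite $0/1$-path and use a transfer-matrix (abacus) argument'' --- is not carried out, and it is not a routine product manipulation: the statistic couples a direction condition $\alpha l=\beta(a+1)$ with a hook-divisibility condition, and you yourself observe that $d_\lambda\ne p_m(\lambda)$ termwise, so some genuinely non-trivial bijection or generating-function identity is needed. As written, the proposal assumes the hard part.

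The paper closes this gap geometrically rather than combinatorially, and in the opposite logical direction. It first passes to classes in $K_0(\nu_{\mbC})$ (legitimate because of the cell decomposition), then proves two lemmas: (i) the class of the fixed locus depends only on $m=\alpha+\beta$, because the components of $\left(\Hilb{n}\right)^{\Gamma_{m}}$ are quiver varieties carrying a symplectic form of $\mbC^*$-weight $m$, and since all tangent weights are divisible by $m$ the positive-weight subspace at each $T_{\alpha,\beta}$-fixed point is a Lagrangian, forcing every attracting cell to have dimension exactly half that of the component; (ii) for $(\alpha,\beta)=(0,m)$ the class is computed directly from the known structure of $T_{0,1}$-fixed ideals (fibered over $S^n\mbC_x$) together with the power structure over $K_0(\nu_{\mbC})$ and the freeness of the $\Gamma_m$-action on $\mbC_x\setminus\{0\}$. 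If you want to salvage your route, you would either need to actually prove the identity $\sum_\lambda q^{h_{\alpha,\beta}(\lambda)}t^{|\lambda|}=\prod_{m\nmid i}(1-t^i)^{-1}\prod_{i}(1-qt^{mi})^{-1}$ combinatorially, or import an argument like the paper's Lemma~\ref{lemma: first lemma} to show the left-hand side is independent of the splitting of $m$ and then handle one convenient case.
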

Here we use Borel-Moore homology, because the variety $\left((\mbC^2)^{[n]}\right)^{T_{\alpha,\beta}\times\Gamma_{\alpha+\beta}}$ is in general not compact, if $\alpha=0$.

If $\alpha$ and $\beta$ are coprime, then $\Gamma_{\alpha+\beta}\subset T_{\alpha,\beta}$. Hence, Theorem~\ref{theorem: quasihomogeneous1} follows from Theorem~\ref{theorem: quasihomogeneous2}.

Our proof of Theorem~\ref{theorem: quasihomogeneous2} consists of two steps. First, we prove that the left-hand side of~\eqref{formula: quasihomogeneous2} depends only on the sum $\alpha+\beta$. We use an argument with an equivariant symplectic form that is very similar to the one that was applied by the third author in \cite{Nak2} (proof of Proposition~5.7). After that the case $\alpha=0$ can be done using a notion of a power structure over the Grothendieck ring of quasiprojective varieties.

In \cite{BurFei}, as a corollary of Theorem~\ref{theorem: quasihomogeneous1}, there was derived a combinatorial identity. In the same way Theorem~\ref{theorem: quasihomogeneous2} leads to a more general combinatorial identity. Denote by $\mcY$ the set of all Young diagrams. The number of boxes in a Young diagram $Y$ is denoted by $|Y|$. For a box $s\in Y$ we define the numbers $l_Y(s)$ and $a_Y(s)$, as it is shown on Fig.~\ref{armlegpic}.

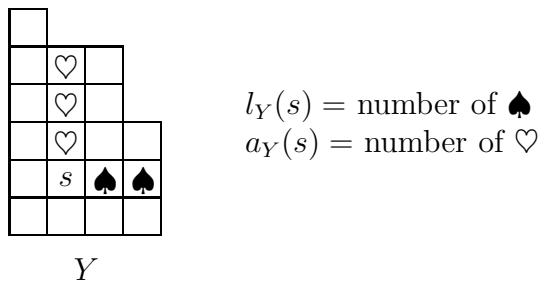
\begin{figure}[h]
\begin{center}
\begin{picture}(70,40)
\put(0,6){
\multiput(0,0)(0,5){7}{\line(1,0){5}}
\multiput(5,0)(0,5){6}{\line(1,0){5}}
\multiput(10,0)(0,5){6}{\line(1,0){5}}
\multiput(15,0)(0,5){4}{\line(1,0){5}}

\multiput(0,0)(5,0){5}{\line(0,1){5}}
\multiput(0,5)(5,0){5}{\line(0,1){5}}
\multiput(0,10)(5,0){5}{\line(0,1){5}}
\multiput(0,15)(5,0){4}{\line(0,1){5}}
\multiput(0,20)(5,0){4}{\line(0,1){5}}
\multiput(0,25)(5,0){2}{\line(0,1){5}}

\put(6.5,6.5){$s$}
\multiput(11,6)(5,0){2}{$\spadesuit$}
\multiput(5.8,10.8)(0,5){3}{$\heartsuit$}

\put(31,11){$a_Y(s)=$ number of $\heartsuit$}
\put(31,16){$l_Y(s)=$ number of $\spadesuit$}
}
\put(8.5,0){$Y$}
\end{picture}
\end{center}
\caption{Definition of the numbers $a_Y(s)$ and $l_Y(s)$}
\label{armlegpic}
\end{figure}

For a Young diagram $Y$ define the number $h_{\alpha,\beta}(Y)$ by
$$
h_{\alpha,\beta}(Y)=\left\{s\in Y\left|\begin{smallmatrix}\alpha l_Y(s)=\beta(a_Y(s)+1)\\ (\alpha+\beta)\mid l_Y(s)+a_Y(s)+1\end{smallmatrix}\right.\right\}.
$$

The following corollary is a generalization of Theorem~1.2 from \cite{BurFei}.
\begin{corollary}\label{theorem: combinatorial identity}
Let $\alpha$ and $\beta$ be arbitrary non-negative integers, such that $\alpha+\beta\ge 1$. Then we have
\begin{gather}\label{formula: generalized identity}
\sum_{Y\in\mcY}q^{h_{\alpha,\beta}(Y)}t^{|Y|}=\prod_{\substack{i\ge 1\\(\alpha+\beta)\nmid i}}\frac{1}{1-t^i}\prod_{i\ge 1}\frac{1}{1-qt^{(\alpha+\beta)i}}.
\end{gather}
\end{corollary}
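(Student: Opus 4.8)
The plan is to deduce Corollary~\ref{theorem: combinatorial identity} directly from Theorem~\ref{theorem: quasihomogeneous2} by computing the left-hand side of~\eqref{formula: quasihomogeneous2} as an explicit sum over Young diagrams. Write $M=\Hilb{n}$, $G=T_{\alpha,\beta}\times\Gamma_{\alpha+\beta}$ and $X_n=M^G$. Since $G$ is diagonalizable (a torus times a finite abelian group) acting algebraically on the smooth variety $M$, the fixed locus $X_n$ is smooth and at each fixed point $p$ one has $T_pX_n=(T_pM)^G$. The $\torus{2}$-fixed points of $M$ are the monomial ideals $I_Y$, indexed by Young diagrams with $|Y|=n$, and all of them lie in $X_n$. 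I would exploit the residual one-parameter action coming from the cocharacter $\sigma=(-\beta,\alpha)$ of $\torus{2}$ together with a Bialynicki--Birula decomposition of $X_n$, reducing the whole computation to a fixed-point count.

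The key input is the $\torus{2}$-character of the tangent space at a monomial ideal, which by~\cite{EllStr,Nak1} is
\[
\sum_{s\in Y}\Bigl(t_1^{-l_Y(s)}t_2^{a_Y(s)+1}+t_1^{l_Y(s)+1}t_2^{-a_Y(s)}\Bigr).
\]
The first summand has $T_{\alpha,\beta}$-weight $-\alpha l_Y(s)+\beta(a_Y(s)+1)$, which vanishes exactly when $\alpha l_Y(s)=\beta(a_Y(s)+1)$, and it is $\Gamma_{\alpha+\beta}$-invariant exactly when $(\alpha+\beta)\mid(l_Y(s)+a_Y(s)+1)$; thus it contributes to $T_{I_Y}X_n$ precisely under the two defining conditions of $h_{\alpha,\beta}(Y)$. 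The second summand contributes iff $\alpha(l_Y(s)+1)=\beta a_Y(s)$ together with the same divisibility. Pairing with $\sigma=(-\beta,\alpha)$ gives weight $\beta l_Y(s)+\alpha(a_Y(s)+1)>0$ on the first summand and $-(\beta(l_Y(s)+1)+\alpha a_Y(s))<0$ on the second (both strict, using $\alpha+\beta\ge1$ and $a_Y(s),l_Y(s)\ge0$). Hence $\sigma$ has no zero weights on $X_n$, so its fixed points are isolated and equal to the $I_Y$, and the attracting cell at $I_Y$ is an affine space of complex dimension equal to the number of first-type invariant weights, namely $h_{\alpha,\beta}(Y)$.

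To finish, I would invoke that the Bialynicki--Birula decomposition of the smooth variety $X_n$ with respect to $\sigma$ is a filtrable decomposition into affine cells, so that $H^{BM}_*(X_n;\mbQ)$ is free with exactly one generator in degree $2h_{\alpha,\beta}(Y)$ for each diagram $Y$ with $|Y|=n$; this yields $P^{BM}_q(X_n)=\sum_{|Y|=n}q^{h_{\alpha,\beta}(Y)}$, and summing over $n$ and comparing with Theorem~\ref{theorem: quasihomogeneous2} gives~\eqref{formula: generalized identity}. The hard part will be the passage from the cell decomposition to Borel--Moore homology: one must verify that the stratification is genuinely filtrable by affine spaces, so that the attracting cells form a basis of $H^{BM}_*$ with no differentials and each cell $\cong\mbC^{h_{\alpha,\beta}(Y)}$ contributes exactly $q^{h_{\alpha,\beta}(Y)}$. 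This is the point requiring the most care when $\alpha=0$, where $X_n$ is non-compact, which is precisely why Borel--Moore homology is the correct invariant here.
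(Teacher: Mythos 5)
Your strategy coincides with the paper's: the printed proof of this corollary is only a two‑line sketch (``use Bialynicki--Birula to get a cell decomposition of $(\Hilb{n})^{T_{\alpha,\beta}\times\Gamma_{\alpha+\beta}}$ and match the cells with the boxes counted by $h_{\alpha,\beta}$''), and you are filling in exactly the intended details: the Ellingsrud--Str{\o}mme character of $T_{I_Y}\Hilb{n}$, the identification of the $T_{\alpha,\beta}\times\Gamma_{\alpha+\beta}$‑invariant summands, and a residual cocharacter with isolated fixed points $I_Y$ and affine attracting cells. For $\alpha\ge 1$ the bookkeeping is right and consistent with the paper's arm/leg conventions (for $\alpha=\beta=1$, $n=2$ it gives cells of dimensions $h_{1,1}((2))=1$ and $h_{1,1}((1,1))=0$, i.e.\ $P_q(\P^1)=1+q$, as it should); for $\alpha,\beta\ge1$ the fixed locus is projective so filtrability is standard, and for $\beta=0$ one can check via properness of the Hilbert--Chow morphism that every point has a limit under $\sigma=(0,\alpha)$, so the attracting sets cover. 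One small inaccuracy: the strict inequalities $\beta l_Y(s)+\alpha(a_Y(s)+1)>0$ and $\beta(l_Y(s)+1)+\alpha a_Y(s)>0$ do \emph{not} follow from $\alpha+\beta\ge1$ and $a_Y(s),l_Y(s)\ge0$ alone (take $\alpha=0$, $l_Y(s)=0$); they hold only for the summands satisfying the invariance conditions, which exclude the degenerate boxes, and the argument should say so.

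The genuine gap is the case $\alpha=0$, which you flag but do not close, and which in fact cannot be closed in the form you state. Two things go wrong. First, with $\sigma=(-\beta,0)$ the attracting sets do not cover $X_n$: already for $n=1$ one has $X_1=\mbC_x$ and $t\cdot x=t^{-\beta}x$ has a limit as $t\to0$ only at $x=0$, so the passage from attracting cells to $H^{BM}_*$ is unjustified. Second, and decisively, the quantity you assert as the cell dimension — the number of boxes with $\alpha l_Y(s)=\beta(a_Y(s)+1)$ — is identically zero when $\alpha=0$ (the condition reads $0=\beta(a_Y(s)+1)$), whereas $P^{BM}_q(X_1)=q$; so no choice of cocharacter can make the attracting‑cell dimensions equal $h_{0,\beta}(Y)$. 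This in fact shows that the corollary as literally stated fails at $\alpha=0$, $\beta=1$, $n=1$: the left side of \eqref{formula: generalized identity} has $t$‑coefficient $1$ while the right side has $q$ (the geometric Theorem~\ref{theorem: quasihomogeneous2} is fine; it is the combinatorial translation that degenerates asymmetrically, the case $\beta=0$ being unproblematic). The repair is to use the opposite cocharacter $(\beta,-\alpha)$ when $\alpha=0$, under which every point does flow to a monomial ideal and the cell dimension is the number of \emph{second}‑type invariant summands, i.e.\ boxes with $\alpha(l_Y(s)+1)=\beta a_Y(s)$ — for $\alpha=0$ this is $\#\{s: a_Y(s)=0\}$, which does yield the right‑hand side — or, equivalently, to reduce $\alpha=0$ to $\beta=0$ via the involution $(x,y)\mapsto(y,x)$, which transposes Young diagrams and swaps the two types of boxes. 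Either way, you must state explicitly which of the two conditions counts the cells for which range of $(\alpha,\beta)$ instead of using $h_{\alpha,\beta}$ uniformly.
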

\begin{proof}
The proof is similar to the proof of Theorem~1.2 in \cite{BurFei}. We apply the results from \cite{B-B1,B-B2}, in order to construct a cell decomposition of the variety $\left(\Hilb{n}\right)^{T_{\alpha,\beta}\times\Gamma_{\alpha+\beta}}$, and show that the left-hand side of~\eqref{formula: quasihomogeneous2} is equal to the left-hand side of~\eqref{formula: generalized identity}.
\end{proof}
We thank Ole Warnaar for suggesting this more general combinatorial identity after the paper \cite{BurFei} was published in arXiv. 

\subsection{\it Organization of the paper} 

In Section~\ref{section: power structure} we recall the definition of the Grothendieck ring of complex quasiprojective varieties and the properties of the natural power structure over it. Section~\ref{section: proof} contains the proof of Theorem~\ref{theorem: quasihomogeneous2}.


\section{POWER STRUCTURE OVER THE GROTHENDIECK RING $K_0(\nu_{\mbC})$}\label{section: power structure}

In this section we review the definition of the Grothendieck ring of complex quasiprojective varieties and the power structure over it. 

\subsection{\it Grothendieck ring}

The Grothendieck ring $K_0(\nu_{\mbC})$ of complex quasiprojective varieties is the abelian group generated by the classes $[X]$ of all complex quasiprojective varieties $X$ modulo the relations:
\begin{enumerate}
\item if varieties $X$ and $Y$ are isomorphic, then $[X]=[Y]$;
\item if $Y$ is a Zariski closed subvariety of $X$, then $[X]=[Y]+[X\backslash Y]$.
\end{enumerate}  
The multiplication in $K_0(\nu_{\mbC})$ is defined by the Cartesian product of varieties: $[X_1]\cdot[X_2]=[X_1\times X_2]$. The class $\left[\mathbb A^1_{\mbC}\right]\in K_0(\nu_{\mbC})$ of the complex affine line is denoted by $\mbL$.

We will need the following property of the Grothendieck ring $K_0(\nu_{\mbC})$. There is a natural homomorphism $\theta\colon\mbZ[z]\to K_0(\nu_{\mbC})$, defined by $z\mapsto\mbL$. This homomorphism is injective (see~e.g.\cite{Loo}). 

\subsection{\it Power structure}

In \cite{GusLueMel} there was defined a notion of a power structure over a ring and there was described a natural power structure over the Grothendieck ring $K_0(\nu_\mbC)$. This means that for a series $A(t)=1+a_1t+a_2t^2+\ldots\in 1 + t\cdot K_0(\nu_\mbC)[[t]]$ and for an element $m\in K_0(\nu_\mbC)$ one defines a series $(A(t))^m \in 1 + t\cdot K_0(\nu_\mbC)[[t]]$, so that all the usual properties of the exponential function hold. 

For a series $A(t)=(1-t)^{-1}$ and a quasiprojective variety $M$ the series $A(t)^{[M]}$ coincides with the motivic zeta function~$\zeta_{[M]}(t)$ introduced by M. Kapranov (\cite{Kap}):
$$
(1-t)^{-[M]}=1+\sum_{i\ge 1}[S^i M]t^i,
$$
where $S^i M$ is the $i$-th symmetric product of the variety $M$. There is the following generalization of this formula. Suppose that $M_1,M_2,\ldots$ and $N$ are quasiprojective 
varieties. Then we have
\begin{align}
&\left(1+\sum_{i\ge 1}[M_i]t^i\right)^{[N]}=1+\sum_{n\ge 1}X_n t^n,\quad\text{where}\notag\\
&X_n=\sum_{\sum_{i\ge 1} i d_i=n}\left[\left.\left(\left(N^{\sum d_i}\backslash\Delta\right)\times\left(\prod M_i^{d_i}\right)\right)\right/\prod S_{d_i}\right].\label{formula: classformula}
\end{align}
Here $\Delta$ is the ''large diagonal'' in $N^{\sum d_i}$, which consists of $\left(\sum d_i\right)$ points of $N$ with at least two coinciding ones. The permutation group $S_{d_i}$ acts by permuting corresponding $d_i$ factors in $\prod N^{d_i}$ and $\prod M_i^{d_i}$ simultaneously.  

We also need the following property of the power structure over $K_0(\nu_\mbC)$. For any $i\ge 1$ and $j\ge 0$ we have \begin{gather}\label{Lpower}
(1-\mbL^j t^i)^{-\mbL}=(1-\mbL^{j+1}t^i)^{-1}.
\end{gather}
It can be derived from several statements from \cite{GusLueMel} as follows. Let $a_i$, $i\ge 1$, and $m$ be from the Grothendieck ring $K_0(\nu_\mbC)$ and $A(t)=1+\sum_{i\ge 1}a_i t^i$. Then for any $s\ge 0$ we have
\begin{align}
&A(\mbL^s t)^m=\left.\left(A(t)^m\right)\right|_{t\mapsto\mbL^s t},\label{statement 2}\\
&(1-t)^{-\mbL^s m}=\left.(1-t)^{-m}\right|_{t\mapsto\mbL^s t}.\label{statement 3}
\end{align}
Formula~\eqref{statement 2} follows from Statement~2 in \cite{GusLueMel} and equation~\eqref{statement 3} follows from Statement~3 in \cite{GusLueMel}. Also for any $s\ge 1$ we have (see \cite{GusLueMel})
\begin{gather}
A(t^s)^m=\left.\left(A(t)^m\right)\right|_{t\mapsto t^s}.\label{tmp}
\end{gather}
Obviously, formula~\eqref{Lpower} follows from \eqref{statement 2}, \eqref{statement 3} and \eqref{tmp}. 


\section{PROOF OF THEOREM \ref{theorem: quasihomogeneous2}}\label{section: proof}

Using the $\torus{2}$-action on $\Hilb{n}$ and the results from \cite{B-B1,B-B2} one can easily construct a cell decomposition of $\left(\Hilb{n}\right)^{\Gamma_{\alpha+\beta}\times T_{\alpha,\beta}}$. Thus, Theorem~\ref{theorem: quasihomogeneous2} is equivalent to the following formula
\begin{gather}\label{formula: class}
\sum_{n\ge 0}\left[\left((\mbC^2)^{[n]}\right)^{T_{\alpha,\beta}\times\Gamma_{\alpha+\beta}}\right]t^n=\prod_{\substack{i\ge 1\\(\alpha+\beta)\nmid i}}\frac{1}{1-t^i}\prod_{i\ge 1}\frac{1}{1-\mbL t^{(\alpha+\beta)i}}.
\end{gather}

It is clear that equation~\eqref{formula: class} is a corollary of the following two lemmas.

\begin{lemma}\label{lemma: first lemma}
For any $\alpha,\beta\ge 0$, such that $\alpha+\beta\ge 1$, we have
$$
\left[\left((\mbC^2)^{[n]}\right)^{T_{\alpha,\beta}\times\Gamma_{\alpha+\beta}}\right]=\left[\left((\mbC^2)^{[n]}\right)^{T_{0,\alpha+\beta}\times\Gamma_{\alpha+\beta}}\right].
$$
\end{lemma}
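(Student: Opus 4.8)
The plan is to deduce the lemma from the holomorphic symplectic geometry of the Hilbert scheme, following the strategy of \cite{Nak2} (proof of Proposition~5.7). First I would record that $\Hilb{n}$ carries the holomorphic symplectic form $\omega$ induced by $dx\wedge dy$ on $\mbC^2$, and that under $\torus{2}$ this form has weight $(1,1)$, i.e. $g^*\omega=t_1t_2\,\omega$ for $g=(t_1,t_2)$. Since every element $(\zeta^j,\zeta^{-j})$ of $\Gamma_{\alpha+\beta}$ satisfies $\zeta^j\zeta^{-j}=1$, the finite group $\Gamma_{\alpha+\beta}$ preserves $\omega$; hence its fixed locus $M:=\left(\Hilb{n}\right)^{\Gamma_{\alpha+\beta}}$ is a smooth quasiprojective \emph{holomorphic symplectic} variety, still acted on by the whole torus $\torus{2}$ (which commutes with $\Gamma_{\alpha+\beta}$), with $\omega|_M$ again of weight $(1,1)$. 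In these terms the lemma asks for $\left[M^{T_{\alpha,\beta}}\right]=\left[M^{T_{0,\alpha+\beta}}\right]$.

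Next I would put the statement in a clean form using the Bia\l ynicki--Birula theory already invoked in the paper (\cite{B-B1,B-B2}). Writing $m=\alpha+\beta$, both $T_{\alpha,\beta}$ and $T_{0,m}$ are images of one-parameter subgroups $\sigma$ of $\torus{2}$ with $\langle(1,1),\sigma\rangle=m$, so $\omega|_M$ has \emph{nonzero} weight $m$ under each of them. At every $\torus{2}$-fixed point $I_Y$ (a monomial ideal) the tangent space of $M$ is spanned by pairs of weight vectors whose $\torus{2}$-weights sum to $(1,1)$ — these are exactly the $\omega$-dual pairs — and the surviving pairs are indexed by the boxes $s\in Y$ with $(\alpha+\beta)\mid(l_Y(s)+a_Y(s)+1)$. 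For such a $\sigma$ the two $\sigma$-weights of each pair sum to $m\neq0$, so at most one member of a pair is $\sigma$-fixed; the residual $\torus{2}/T$-action then paves $M^\sigma$ and gives $[M^\sigma]=\sum_{|Y|=n}\mbL^{c(Y,\sigma)}$ for an explicit local count $c(Y,\sigma)$. The goal becomes the equality $\sum_Y\mbL^{c(Y,T_{\alpha,\beta})}=\sum_Y\mbL^{c(Y,T_{0,m})}$.

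The heart of the argument — and the step I expect to be the main obstacle — is to prove this equality \emph{geometrically}, i.e. without reproving the underlying combinatorial identity by hand, which is precisely the computation the paper is trying to avoid. The tool is the equivariant symplectic form: because $\omega$ pairs the $\sigma$-weight-$\nu$ part of each normal space with the $\sigma$-weight-$(m-\nu)$ part and $m\neq0$, the attracting data of the two cocharacters are rigidly constrained. Concretely I would compare the $\torus{2}$-equivariant Bia\l ynicki--Birula decompositions of $M$ attached to $T_{\alpha,\beta}$ and to $T_{0,m}$ and show, using the nondegenerate $\omega$-pairing together with the residual torus action, that the affine-bundle contributions over the $\torus{2}$-fixed points reorganize into the same total class, as in \cite{Nak2}. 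The delicate point is bookkeeping the positive- and negative-weight directions across these two (wall) cocharacters, whose \emph{local} fixed dimensions at a given $I_Y$ genuinely differ, and checking that the symplectic pairing nevertheless forces the \emph{global} motivic classes to coincide.

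Finally I would note why the formulation must be motivic/Borel--Moore: when $\alpha=0$ the variety $M^{T_{0,m}}$ is noncompact, so the equality has to be read in $K_0(\nu_{\mbC})$ (equivalently, via $H^{BM}_*$), which is exactly the setting here. Once $\left[M^{T_{\alpha,\beta}}\right]=\left[M^{T_{0,m}}\right]$ is established, the lemma is proved, and it reduces the computation of the left-hand side of \eqref{formula: class} for arbitrary $(\alpha,\beta)$ to the single case $\alpha=0$.
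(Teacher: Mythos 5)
You have assembled the right ingredients (the weight-$(1,1)$ symplectic form, the Bia\l ynicki--Birula decomposition, the $\omega$-duality of weight spaces), but the step you yourself flag as ``the heart of the argument'' is exactly where the proof is missing, and the route you sketch for it leads back toward the combinatorial identity the paper is designed to avoid. Writing $m=\alpha+\beta$, reducing the lemma to $\sum_Y\mbL^{c(Y,T_{\alpha,\beta})}=\sum_Y\mbL^{c(Y,T_{0,m})}$ for a local count $c(Y,\sigma)$ at the monomial ideals is essentially a restatement of Corollary~\ref{theorem: combinatorial identity}; you give no mechanism for why the symplectic pairing ``forces the global motivic classes to coincide,'' and the pairing alone does not suffice: for an $\omega$-dual pair of tangent weights $k$ and $m-k$ with $0<k<m$, \emph{both} directions are attracting for $\sigma$, so the attracting-fiber dimensions at a given fixed point genuinely depend on the cocharacter unless such intermediate weights are excluded.

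The missing idea is to run the Bia\l ynicki--Birula argument one level up, on the irreducible components $M_i$ of $M=\left(\Hilb{n}\right)^{\Gamma_m}$ rather than on the fixed locus $M^{\sigma}$. Since the $m$-th roots of unity in $\mbC^*$ map into $\Gamma_m$ under $t\mapsto(t^\alpha,t^\beta)$, they act trivially on $M_i$, so every $T_{\alpha,\beta}$-weight occurring in the tangent space of $M_i$ at a fixed point is divisible by $m$; combined with the fact that $\omega$ has weight $m$ for this $\mbC^*$-action, this gives $\bigoplus_{k\ge 1}H(k)=\bigoplus_{k\ge m}H(k)$, which is dual to $\bigoplus_{k\le 0}H(k)$, so every attracting cell is an affine bundle of rank exactly $\tfrac{1}{2}\dim M_i$. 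Hence $[M_i]=\mbL^{d_i/2}\left[M_i^{T_{\alpha,\beta}}\right]$, with both $[M_i]$ and the exponent $d_i/2$ independent of how $m$ splits as $\alpha+\beta$; cancelling $\mbL^{d_i/2}$ (legitimate because these classes are polynomials in $\mbL$ and $\mbZ[z]\to K_0(\nu_{\mbC})$ is injective) yields the lemma. This is the content of the paper's proof, and it never requires comparing the two cell decompositions of the fixed loci box by box.
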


\begin{lemma}\label{lemma: second lemma}
For any $m\ge 1$ we have
\begin{gather*}
\sum_{n\ge 0}\left[\left((\mbC^2)^{[n]}\right)^{T_{0,m}\times\Gamma_m}\right]t^n=\prod_{\substack{i\ge 1\\m\nmid i}}\frac{1}{1-t^i}\prod_{i\ge 1}\frac{1}{1-\mbL t^{m i}}.
\end{gather*}
\end{lemma}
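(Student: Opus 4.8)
The plan is to describe the fixed locus $\left(\Hilb{n}\right)^{T_{0,m}\times\Gamma_m}$ explicitly as a variety and to read off its class in $K_0(\nu_\mbC)$ using the power structure of Section~\ref{section: power structure}. First I would observe that $T_{0,m}\times\Gamma_m$ acts on $\mbC^2$ through its image $\mu_m\times\mbC^*\subset\torus{2}$, where $\mu_m=\{\zeta^j\}$ is the group of $m$-th roots of unity acting on the $x$-coordinate and $\mbC^*$ scales the $y$-coordinate. Since the $\mbC^*$-orbit of any point $(a,b)$ with $b\ne0$ is noncompact, the finite support of a fixed ideal $I$ must lie on the $x$-axis $\{y=0\}$, and it must be invariant under $\mu_m$. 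Thus the support is the origin together with free $\mu_m$-orbits $\{a,\zeta a,\dots,\zeta^{m-1}a\}$ with $a\neq0$.

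Next I would invoke the standard fact that an ideal of finite colength is the intersection of its localizations at the distinct points of its support, so that $\mbC[x,y]/I$ is the product of its local factors. This identifies the fixed locus with the disjoint union, over $n_0+n_1=n$, of (fixed subschemes of colength $n_0$ supported at the origin) $\times$ (fixed subschemes of colength $n_1$ supported off the origin). Passing to generating series in $t$, the left-hand side of Lemma~\ref{lemma: second lemma} factors as (origin factor)$\times$(off-origin factor), and it remains to compute these two factors.

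For the origin factor, a $\mbC^*$-fixed ideal is homogeneous in $y$, hence of the form $I=\bigoplus_{b\ge0}y^bJ_b$ with a chain of ideals $J_0\subseteq J_1\subseteq\cdots$ in $\mbC[x]$; punctual support at $0$ forces $J_b=(x^{c_b})$ with $c_0\ge c_1\ge\cdots$, i.e.\ a Young diagram, and these monomial ideals are automatically $\mu_m$-invariant and are isolated reduced points. Hence the origin factor is $\sum_{Y\in\mcY}t^{|Y|}=\prod_{i\ge1}(1-t^i)^{-1}$. For the off-origin factor, each free orbit is a point of $\mbC^*/\mu_m\cong\mbC^*$ decorated by the $\mbC^*$-fixed local structure at a representative point, which by the same argument is a Young diagram $\nu$ replicated over the $m$ points of the orbit and therefore contributing $m|\nu|$ to the colength. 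I would then apply formula~\eqref{formula: classformula} with $N=\mbC^*/\mu_m$, so that $[N]=\mbL-1$, and decoration series $\prod_{i\ge1}(1-t^{mi})^{-1}=\sum_{\nu}t^{m|\nu|}$, obtaining the off-origin factor $\left(\prod_{i\ge1}(1-t^{mi})^{-1}\right)^{\mbL-1}$.

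Finally I would combine the two factors and simplify. Writing $\left(\prod_i(1-t^{mi})^{-1}\right)^{\mbL-1}=\left(\prod_i(1-t^{mi})^{-1}\right)^{\mbL}\cdot\prod_i(1-t^{mi})$ and using~\eqref{Lpower} in the form $(1-t^{mi})^{-\mbL}=(1-\mbL t^{mi})^{-1}$, the total series becomes $\prod_{i\ge1}(1-t^i)^{-1}\cdot\prod_{i\ge1}(1-\mbL t^{mi})^{-1}\cdot\prod_{i\ge1}(1-t^{mi})$; since $\prod_i(1-t^i)^{-1}\prod_i(1-t^{mi})=\prod_{m\nmid i}(1-t^i)^{-1}$, this is exactly the right-hand side of Lemma~\ref{lemma: second lemma}. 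The main obstacle is the off-origin step: one must verify rigorously that, at the level of varieties (or at least of classes in $K_0(\nu_\mbC)$), the part of the fixed locus supported away from the origin is isomorphic to the decorated symmetric product to which~\eqref{formula: classformula} applies, i.e.\ that the only continuous modulus of a fixed subscheme is the location in $\mbC^*/\mu_m$ of each orbit, the local structure along each orbit being the discrete datum of a Young diagram.
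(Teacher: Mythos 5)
Your proposal is correct and follows essentially the same route as the paper: the paper likewise splits the fixed locus into the punctual part at the origin (giving $\prod_{i\ge1}(1-t^i)^{-1}$ via the classification of $T_{0,1}$-fixed ideals $I_{\lambda^1,x_1}\cap\dots\cap I_{\lambda^k,x_k}$ from \cite{Nak1}) and the part supported on free $\Gamma_m$-orbits in $\mbC_x\setminus\{0\}$, to which it applies \eqref{formula: classformula} over the quotient of class $\mbL-1$ and then simplifies with \eqref{Lpower} exactly as you do. The ``main obstacle'' you flag is resolved in the paper by the uniqueness of the decomposition of a fixed ideal into its local factors $I_{\lambda^i,x_i}$, which identifies the off-origin locus with the decorated configuration space appearing in \eqref{formula: classformula}.
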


\begin{proof}[Proof of Lemma \ref{lemma: first lemma}]
Let $\left(\Hilb{n}\right)^{\Gamma_{\alpha+\beta}}=\coprod_i\left(\Hilb{n}\right)^{\Gamma_{\alpha+\beta}}_i$ be the decomposition in the irreducible components. It is sufficient to prove that
\begin{gather*}
\left[\left(\Hilb{n}\right)^{\Gamma_{\alpha+\beta}}_i\right]=\mbL^{\frac{d_i}{2}}\left[\left(\left(\Hilb{n}\right)^{\Gamma_{\alpha+\beta}}_i\right)^{T_{\alpha,\beta}}\right],
\end{gather*}
where $d_i=\dim\left(\Hilb{n}\right)^{\Gamma_{\alpha+\beta}}_i$. The subvarieties $\left(\Hilb{n}\right)^{\Gamma_{\alpha+\beta}}_i$ are quiver varieties of affine type $\tilde A_{\alpha+\beta-1}$. We prove the above equality by using the idea in \cite[Proposition 5.7]{Nak2}.

Let $\left(\left(\Hilb{n}\right)^{\Gamma_{\alpha+\beta}}_i\right)^{T_{\alpha,\beta}}=\coprod_j\left(\Hilb{n}\right)^{\Gamma_{\alpha+\beta}\times T_{\alpha,\beta}}_{i,j}$ be the decomposition in the irreducible components. Consider the $\mbC^*$-action on $\Hilb{n}$ induced by the homomorphism $\mbC^*\to\torus{2}, t\mapsto(t^\alpha,t^\beta)$. Define the sets $C_{i,j}$ by 
$$
C_{i,j}=\left\{\left.z\in\left(\Hilb{n}\right)^{\Gamma_{\alpha+\beta}}_i\right|\lim_{t\to 0,t\in\mbC^*}t\cdot z\in\left(\Hilb{n}\right)^{\Gamma_{\alpha+\beta}\times T_{\alpha,\beta}}_{i,j}\right\}.
$$
From \cite{B-B1,B-B2} it follows that the set $C_{i,j}$ is a locally trivial fiber bundle over $\left(\Hilb{n}\right)^{\Gamma_{\alpha+\beta}\times T_{\alpha,\beta}}_{i,j}$ with an affine space as a fiber. Let us denote by $d_{i,j}$ the dimension of a fiber. For $p\in \left(\Hilb{n}\right)^{\Gamma_{\alpha+\beta}\times T_{\alpha,\beta}}_{i,j}$ the tangent space $T_p\left(\Hilb{n}\right)^{\Gamma_{\alpha+\beta}}_i$ is a $\mbC^*$-module. Let 
$$
T_p\left(\Hilb{n}\right)^{\Gamma_{\alpha+\beta}}_i=\sum_{m\in\mbZ}H(m)
$$
be the weight decomposition. It is clear that $d_{i,j}=\dim\left(\bigoplus_{m\ge 1} H(m)\right)$. 

The Hilbert scheme $\Hilb{n}$ has the canonical symplectic form $\omega$ that is induced from the symplectic form $dx\wedge dy$ on $\mbC^2$ (see~e.g.\cite{Nak1}). The form $\omega$ has weight $-\alpha-\beta$ with respect to the $\mbC^*$-action on $\Hilb{n}$. The restriction $\omega|_{\left(\Hilb{n}\right)^{\Gamma_{\alpha+\beta}}_i}$ is the canonical symplectic form on the quiver variety (see~\cite{Nak2}). Therefore, the spaces $\bigoplus_{m\le 0}H(m)$ and $\bigoplus_{m\ge\alpha+\beta}H(m)$ are dual with respect to this form. Obviously, the $(\alpha+\beta)$-th root of unity $\sqrt[\alpha+\beta]{1}$ acts trivially on $\left(\Hilb{n}\right)^{\Gamma_{\alpha+\beta}}_i$, thus, $H(m)=0$, if $(\alpha+\beta)\nmid m$. We get $\bigoplus_{m\ge\alpha+\beta}H(m)=\bigoplus_{m\ge 1} H(m)$ and $d_{i,j}=\dim\left(\bigoplus_{m\ge 1}H(m)\right)=\frac{d_i}{2}$. This completes the proof of the lemma.
\end{proof}

\begin{proof}[Proof of Lemma \ref{lemma: second lemma}]
Obviously, we have $\left(\Hilb{n}\right)^{T_{0,m}}=\left(\Hilb{n}\right)^{T_{0,1}}$. For a partition $\lambda=(\lambda_1,\ldots,\lambda_l)$, $\lambda_1\ge \lambda_2\ge\ldots\ge\lambda_l\ge 1$, and a point $x_0\in\mbC$ define the ideal $I_{\lambda,x_0}\subset\mbC[x,y]$ by
\begin{gather*}
I_{\lambda,x_0}=(y^{\lambda_1},(x-x_0)y^{\lambda_2},\ldots,(x-x_0)^{l-1}y^{\lambda_l},(x-x_0)^l).
\end{gather*}
In \cite{Nak1} it is proved that each element $I\in\left(\Hilb{n}\right)^{T_{0,1}}$ can be uniquely expressed as
$$
I=I_{\lambda^1,x_1}\cap\ldots\cap I_{\lambda^k,x_k}
$$
for some distinct points $x_1,\ldots,x_k\in\mbC$ and for some partitions $\lambda^1,\ldots,\lambda^k$ satisfying $\sum_{i=1}^k|\lambda^i|=n$.

Denote by $\mbC_x$ the $x$-axis in the plane $\mbC^2$. Consider the map $\pi_n\colon \left(\Hilb{n}\right)^{T_{0,1}}\to S^n\mbC_x$ defined by
$$
\pi_n\left(I_{\lambda^1,x_1}\cap\ldots\cap I_{\lambda^k,x_k}\right)=\sum_{i=1}^k|\lambda^i|[x_i].
$$ 

Suppose $Z$ is an open subset of $\mbC_x$. From \eqref{formula: classformula} it follows that
$$
\sum_{n\ge 0}\left[\pi_n^{-1}\left(S^n Z\right)\right]t^n=\left(\prod_{i\ge 1}\frac{1}{1-t^i}\right)^{[Z]}.
$$

The $\Gamma_m$-action on $\mbC_x\backslash\{0\}$ is free and $(\mbC_x\backslash\{0\})/\Gamma_m\cong\mbC_x\backslash\{0\}$, therefore,
\begin{gather*}\label{formula: 2}
\left(\pi_n^{-1}\left(S^n(\mbC_x\backslash\{0\})\right)\right)^{\Gamma_m}\cong
\begin{cases}
\emptyset,&\text{if $m\nmid n$},\\
\pi_l^{-1}\left(S^l(\mbC_x\backslash\{0\})\right),&\text{if $n=m l$}.
\end{cases}
\end{gather*} 
We obtain
$$
\sum_{n\ge 0}\left[\left(\pi_n^{-1}\left(S^n(\mbC_x\backslash\{0\})\right)\right)^{\Gamma_m}\right]t^n=\left(\prod_{i\ge 1}\frac{1}{1-t^{m i}}\right)^{\mbL-1}.
$$
Therefore, we get
\begin{multline*}
\sum_{n\ge 0}\left[\left((\mbC^2)^{[n]}\right)^{T_{0,1}\times\Gamma_m}\right]t^n=\left(\sum_{n\ge 0}[\pi_n^{-1}(n[0])]t^n\right)\left(\sum_{n\ge 0}\left[\left(\pi_n^{-1}\left(S^n(\mbC_x\backslash\{0\})\right)\right)^{\Gamma_m}\right]t^n\right)=\\
=\left(\prod_{i\ge 1}\frac{1}{1-t^i}\right)\left(\prod_{i\ge 1}\frac{1}{1-t^{mi}}\right)^{\mbL-1}=\prod_{\substack{i\ge 1\\ m\nmid i}}\frac{1}{1-t^i}\prod_{i\ge 1}\frac{1}{1-\mbL t^{mi}}.
\end{multline*}
The lemma is proved.
\end{proof}

The theorem is proved.

\subsection*{\it Acknowledgements}

A. B. is partially supported by grant ERC-2012-AdG-320368-MCSK in the group of R. Pandharipande at ETH Zurich, by a Vidi grant of the Netherlands Organization of Scientific Research, by Russian Federation Government grant no. 2010-220-01-077 (ag. no. 11.634.31.0005), by the grants RFBR-10-01-00678, NSh-4850.2012.1, the Moebius Contest Foundation for Young Scientists and by ``Dynasty'' foundation. Research of B. F. was carried out within "The national Research University Higher School of Economics" Academic Fund Program in 2013-2014, research grant 12-01-0016. H. N. is supported by the Grant-in-aid for Scientific Research (No.23340005), JSPS, Japan.

\end{document}